\theoremstyle{plain}
\newtheorem*{theorem}{Theorem}
\newtheorem{lemma}{Lemma}
\newtheorem{proposition}{Proposition}
\theoremstyle{definition}
\newtheorem*{definition}{Definition}
\newtheorem{remark}{Remark}
\newtheorem*{question}{Question}
\def\N{\mathbb N}
\def\Z{\mathbb Z}
\def\C{\mathbb C}
\def\phi{\varphi}
\def\geq{\geqslant}
\def\leq{\leqslant}
\def\x{\times}
\def\lk{\mathop{lk}}
\DeclareMathOperator{\nul}{null}
\def\rk{\text{rank}\,}
\def\ker{\text{ker}}
\def\CC{\mathscr C}
\def\emptyset{\varnothing}
\title{Hopf bands in arborescent Hopf plumbings}
\author{Filip Misev}
\address{Universit\"at Bern, Sidlerstrasse 5, CH-3012 Bern, Switzerland}
\email{filip.misev@math.unibe.ch}
\date{}
\begin{document}

\begin{abstract} For a positive Hopf plumbed arborescent Seifert surface $S$, we study the set of Hopf bands $H\subset S$, up to homology and up to the action of the monodromy. The classification of Seifert surfaces for which this set is finite is closely related to the classification of finite Coxeter groups.
\end{abstract}

\maketitle
\thispagestyle{empty}

\section{Introduction}
Let $S\subset S^3$ be a Seifert surface of a link $L$, and let $q$ be the quadratic form on $H_1(S,\Z)$ associated with the Seifert form. Every oriented simple closed curve $\alpha\subset S$ can be thought of as a framed link in $S^3$, where the framing is induced by the surface $S$ and encoded by the value $q$ takes on the homology class represented by $\alpha$. For a fixed integer $n$, we are interested in the set $\CC_n(S)$ of isotopy classes of $n$-framed unknotted oriented curves $\alpha\subset S$. By Rudolph's work~\cite{Ru} on quasipositive surfaces we know for example $\CC_n(S)=\emptyset$ whenever $n\leq 0$ and $S$ is quasipositive. Here, we focus on positive arborescent (tree-like) Hopf plumbings, where $S$ is a surface obtained by an iterated plumbing of positive Hopf bands according to a finite plane tree $T$. Positive arborescent Hopf plumbings are particular examples of quasipositive surfaces. At the same time, they are fibre surfaces, i.e. pages of open books with binding $K=\partial S$. In fact every fibre surface in $S^3$ can be obtained from the standard disk by successively plumbing and deplumbing positive or negative Hopf bands. This results from Giroux' work on open books and contact structures (see the article~\cite{GG} by Giroux and Goodman). Not much is known about how (non-)unique a presentation of a given fibre surface $S$ as a plumbing of Hopf bands may be. In our previous article~\cite{Mi}, we have studied embedded arcs in fibre surfaces cutting along which corresponds to deplumbing a Hopf band, and we gave examples showing that the plumbing structure can be highly non-unique. Here, we take a similar, but different approach to understanding the plumbing structure of $S$ by studying the set $\CC_n(S)$ in the case $n=1$, whose elements correspond to Hopf bands that can potentially be deplumbed. The monodromy $\phi:S\to S$ of the open book acts on the set $\CC_n(S)$ as well as on its image $C_n(S)$ in $H_1(S,\Z)$, thus providing it with additional structure.
Finite trees can be divided into three families named {\em spherical}, {\em affine} and {\em hyperbolic}, according to the classification of Coxeter groups (compare~\cite{AC1,Hu}). The spherical trees comprise two families, called $A_n$ and $D_n$, plus three more trees named $E_6,E_7,E_8$, whereas the affine trees are denoted $\tilde{D_n},\tilde{E_6},\tilde{E_7},\tilde{E_8}$. Up to these exceptions, all trees fall into the class of hyperbolic trees.

\begin{theorem} Let $T$ be a finite plane tree and $S\subset S^3$ the corresponding positive arborescent Hopf plumbed surface. Then the set of homology classes of Hopf bands $C_1(S)$ is finite if and only if $T$ is spherical.\\In contrast, if $T$ is hyperbolic and $\partial S_T$ is a knot, $C_1(S)$ consists of infinitely many orbits of the monodromy.
\end{theorem}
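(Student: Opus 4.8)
The plan is to reduce the whole statement to the combinatorics of the root system attached to $T$. Let $(e_v)_{v\in V(T)}$ be the basis of $H_1(S_T,\Z)$ formed by the core curves of the plumbed positive Hopf bands. Computing the Seifert form on this basis gives $q(e_v)=1$ for every $v$, and, $T$ being bipartite, after reversing the orientations of the $e_v$ in one colour class one finds that the symmetrised Seifert form $b(x,y)=V(x,y)+V(y,x)$ is exactly the Cartan matrix of the Coxeter diagram $T$. Thus $(H_1(S_T,\Z),q)$ is the simply laced root lattice of $W=W(T)$, the $e_v$ are the simple roots, $q$ is the Tits form, and by Picard--Lefschetz the positive Dehn twist along $e_v$ acts on $H_1(S_T,\Z)$ as the simple reflection $s_v\colon x\mapsto x-b(x,e_v)e_v$; in particular $\phi_\ast$ is a Coxeter element $c=\prod_v s_v$ (the plane structure of $T$ pins down the order, which is irrelevant up to conjugacy). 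By the classification of Coxeter diagrams, $q$ is positive definite exactly when $T$ is spherical, positive semidefinite and degenerate exactly when $T$ is affine, and indefinite exactly when $T$ is hyperbolic; equivalently $W$ is finite iff $T$ is spherical. The implication ``$T$ spherical $\Rightarrow C_1(S_T)$ finite'' is then immediate: any $1$-framed unknotted curve $\alpha\subset S_T$ has $q([\alpha])=1$, so $C_1(S_T)\subset\{x\in H_1(S_T,\Z):q(x)=1\}$, which when $q$ is positive definite is the intersection of a bounded ellipsoid with a lattice of finite rank, hence finite.

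For the converse I would realise roots by Hopf bands. The geometric ingredient is that the mapping class of $S_T$ given by the Dehn twist along the core of a Hopf band $H\subset S_T$ permutes the isotopy classes of Hopf bands in $S_T$; I expect this from the arc-surgery description of deplumbings in \cite{Mi} (the twist being realised by an ambient isotopy in $S^3$ of the summand of the Murasugi sum complementary to $H$). Applying this to the twists along the $e_v$, which induce the simple reflections on homology, shows that $C_1(S_T)$ contains the full $W$-orbit of $\{e_v:v\in V(T)\}$, i.e.\ the set $\Phi$ of real roots of $T$. If $T$ is not spherical then $W$ is infinite, hence has infinitely many reflections and thus infinitely many positive real roots, so $C_1(S_T)$ is infinite; together with the previous paragraph this proves the stated equivalence, for any number of boundary components.

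For the final assertion, assume $T$ hyperbolic and $\partial S_T$ a knot; the latter gives $\det(\operatorname{id}-c)\neq 0$, so $c$ has no nonzero fixed vector and the Tits form is non-degenerate, which excludes the affine phenomenon of $c$-invariant families $\beta+\Z\delta$ of roots. I would then compare two growth rates inside $C_1(S_T)\supseteq\Phi$, on which $\phi$ acts through $c$. First, by the structure of Coxeter elements (McMullen; A'Campo) $c$ is semisimple with spectral radius a real number $\lambda>1$, hence loxodromic for the non-degenerate form $q$; fixing an auxiliary Euclidean norm on $H_1(S_T,\R)$, it follows that each cyclic orbit $\{c^{k}\beta:k\in\Z\}$ meets the ball of radius $R$ in only $O(\log R)$ points. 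Second, every connected indefinite diagram contains an affine subdiagram $\widetilde X\subset T$; its imaginary root $\delta$ produces real roots $\beta+k\delta$ of $W(T)$ whose height grows linearly in $k$, and since a positive root of height $h$ has auxiliary norm $O(h)$, the number of real roots in the ball of radius $R$ grows at least linearly in $R$. Therefore no finite union of cyclic orbits exhausts $\Phi$, and $c=\phi_\ast$ acts on $C_1(S_T)$ with infinitely many orbits.

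The main obstacle is the realisation step in the second paragraph --- upgrading the homological reflection action of $W$ to a genuine action on isotopy classes of sub-Hopf-bands of $S_T$, so that each real root is certified to lie in $C_1(S_T)$ and not merely in the quadric $\{q=1\}$. Everything else is the classification of Coxeter diagrams, an elementary growth estimate, and the known spectral behaviour of Coxeter elements.
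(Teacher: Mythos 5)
Your first direction (spherical $\Rightarrow$ finite) is fine and matches the paper. The rest has two genuine gaps, and the first is fatal as stated. The step you flag as the ``main obstacle'' --- that $C_1(S_T)$ contains the whole $W$-orbit $\Phi$ of the classes $e_v$ --- is not merely unproven, it is false. A Dehn twist along the core of a plumbed Hopf band is a diffeomorphism of $S_T$ but is not induced by an ambient isotopy of $S^3$ (only the full monodromy is, via the fibration flow), so it has no reason to preserve unknottedness, nor even representability by embedded curves; moreover on $H_1$ a Dehn twist acts by a transvection for the intersection form, not by the reflection $x\mapsto x-b(x,e_v)e_v$ (the identification of $\phi_*$ with a Coxeter element is true, by A'Campo, but not through your mechanism). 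Concretely, for $T=\tilde D_4$ the classes $w_3+ku$ with $w_3=(1,1,1,0,0)$ and $u=(2,1,1,1,1)$ satisfy $q=1$ for all $k$, hence are real roots lying in the $W$-orbit of the simple roots, yet for $k\neq 0,-1$ they cannot be represented by any simple closed curve in $S_{\tilde D_4}$ (after capping off three boundary components they become the non-primitive vector $(2k+1)(1,2)$), so they are not in $C_1$. Thus your route to ``non-spherical $\Rightarrow C_1$ infinite'' collapses. The repair is to use only monodromy orbits: for an affine tree the monodromy of $S_{T'}$ does preserve unknottedness, and one computes e.g.\ $M^{(n-2)k}v_1=\pm(v_1+2ku)$, giving an infinite, linearly growing family of genuine Hopf band classes; for hyperbolic $T$ one then uses $C_1(S_{T'})\subset C_1(S_T)$ for an affine subtree $T'$. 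This also supplies the linear-growth family your counting argument needs, in place of the real roots $\beta+k\delta$ which need not lie in $C_1$.

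The second gap is your assertion that the Coxeter element is semisimple (``McMullen; A'Campo''), from which you deduce the bounded-or-exponential dichotomy and the $O(\log R)$ orbit count. This is not a known general fact and is not a formal consequence of $\det(\mathrm{id}-c)\neq 0$ or of non-degeneracy of the form; it is precisely the delicate point. What is actually needed (and all that is needed) is the absence of Jordan blocks of size $>1$ at eigenvalues of modulus one, and proving this uses the hypothesis that $\partial S_T$ is a knot in an essential, non-homological way: one bounds Levine--Tristram signature jumps at unit-circle zeros of $\Delta_K$ by geometric multiplicities and invokes Liechti's result that for tree-like Hopf plumbings the number of unit-circle zeros of the Alexander polynomial equals $\sigma(K)+\nul(K)$, forcing equality of algebraic and geometric multiplicities there. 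Your growth comparison (orbits bounded or exponential versus a linearly growing family, hence no finite set of orbits suffices) is essentially the paper's argument, but without this lemma and without a certified linearly growing family inside $C_1(S_T)$ it does not close.
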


Interestingly, the above correspondence between Coxeter groups and tree-like Hopf plumbings does not seem to be of purely homological nature: in the exceptional cases that correspond to affine Coxeter groups, there are in fact infinitely many $\phi$-orbits of homology classes $a\in H_1(S,\Z)$ such that $q(a)=1$. However, it is conceivable that only finitely many orbits can be realised by honest Hopf bands (that is, by unknotted embedded simple closed curves in $S$). We prove this for the smallest affine tree $\tilde{D}_4$, where it already suffices to exclude homology classes that are not representable by simple closed curves. In the case $\tilde{E_6}$, for example, infinitely many orbits can be realised by embedded (possibly knotted) simple closed curves. We did not succeed in finding unknotted representatives of these homology classes, though.

In the spherical cases, the set $C_1(S)$ coincides with $q^{-1}(1)\subset H_1(S,\Z)$ and consists of the ``obvious'' Hopf bands only, that is, simple combinations of the ones used in the plumbing construction, and their images under the monodromy. It might be interesting to study $\CC_n(S)$ or $C_n(S)$ for other $n$ and other classes of Seifert surfaces, as well as the case where unknotted curves are replaced by curves of a fixed knot type.

\subsection*{Plan of the article}
In the subsequent section we define and briefly discuss the concepts in question, that is, Seifert surfaces, the Seifert form, fibre surfaces, monodromy and Hopf plumbing. Section~\ref{sec:quadraticforms} concerns the integral quadratic forms that arise from trees. The above theorem is a consequence (in fact, a summary) of three propositions. Proposition~\ref{prop:spherical} concerns the spherical trees and is given in Section~\ref{sec:spherical}. In Section~\ref{sec:affine}, the affine trees are discussed in Proposition~\ref{prop:affine}. Finally we address the hyperbolic case with Propostion~\ref{prop:infiniteknots} in the last section.

\newpage

\section{Terms and definitions}

\subsection{Seifert surfaces, Seifert form}
Throughout, $S$ denotes a compact connected oriented surface with boundary embedded in the three-sphere, a {\em Seifert surface} for short. The {\em Seifert form} of a Seifert surface $S$ is a bilinear form $(.\,,.)$ on $H_1(S,\Z)$ defined on oriented simple closed curves $\alpha,\beta\subset S$ by $(\alpha,\beta)=\lk(\alpha,\beta^+)$, where $\beta^+$ is obtained by slightly pushing $\beta$ into $S^3\setminus S$ along the positive normal direction to $S$, and $\lk$ denotes the linking number. The Seifert form induces a quadratic form $q:H_1(S,\Z)\to\Z$ by $q(a)=-(a,a)$. For a simple closed curve $\alpha$, the integer $-q(\alpha)$ describes the framing\footnote{The sign makes sure that $q=+1$ on positive Hopf bands.} of an annular neighbourhood of $\alpha$ in $S$.

\subsection{Fibre surfaces, monodromy} \label{subsec:monodromy}
A Seifert surface $S$ is called a {\em fibre surface} if its interior $\mathring{S}$ is the fibre of a locally trivial fibre bundle $S^3\setminus\partial S\to S^1$, that is, $S^3\setminus\partial S$ has the structure of a mapping torus $(\mathring{S}\x [0,1])/_{(x,1)\sim (\phi(x),0)}$. The glueing homeomorphism $\phi:S\to S$, called the {\em monodromy}, is determined by the fibration up to isotopy. It is known that the Seifert matrix $V$ of a fibre surface $S$ with respect to a basis of $H_1(S,\Z)$ is invertible~\cite[Lemma~8.6]{BZH}. The matrix $M$ of the homological action of the monodromy with respect to the same basis can be computed using the formula $M=V^{-\top}V$, see~\cite[Lemma~8.3]{Sa}.

\subsection{Hopf plumbing}
Let $H$ be a Hopf band, that is, an unknotted annulus with a (positive or negative) full twist, as in Figure~\ref{fig:HopfPlumbing}. Let $S$ be a Seifert surface and let $I\subset S$ be a properly embedded interval with endpoints on $\partial S$ (an {\em arc} for short).
\begin{figure}[h]
\epsffile{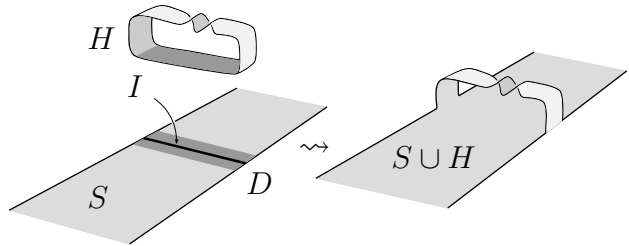}
\put(-205,75){$H$}
\put(-205,15){$S$}
\put(-190,57){$I$}
\put(-145,20){$D$}
\put(-125,35){$\leadsto$}
\put(-90,30){$S\cup H$}
\caption{Plumbing a Hopf band $H$ to a surface $S$ along $I$.\label{fig:HopfPlumbing}}
\end{figure}
Take a neighbourhood $D\subset S$ of $I$ such that $D$ is an embedded square with two opposite sides in $\partial S$. Thicken $D$ to one side of $S$ to obtain a box $B$ that intersects $S$ exactly in $D$. Similarly, take a square $D'\subset H$ with two opposite sides in $\partial H$. Place $H$ inside $B$, matching the square $D$ with $D'$ such that the sides $\partial H\cap D'$ are parallel to $I$. The surface $S\cup H$ is then said to be obtained from $S$ by {\em Hopf plumbing}. Two arcs that are isotopic in $S$ yield isotopic surfaces. Hopf plumbing preserves fibredness: If $S$ is a fibre surface, then so is any surface obtained from $S$ by Hopf plumbing, compare~\cite{St,Ga}.

\subsection{Positive arborescent Hopf plumbings}
Given a finite plane tree $T$, construct a fibre surface $S=S_T$ by taking one positive Hopf band for every vertex of $T$ and use plumbing to glue all pairs of Hopf bands that correspond to adjacent vertices in $T$, respecting the cyclic order of the edges adjacent to each vertex. A Seifert surface $S$ obtained in this way is called a {\em positive arborescent Hopf plumbing}. This construction is described and studied in greater generality by Bonahon and Siebenmann in their work on arborescent knots~\cite{BS}. For example, if $T$ is the tree $A_n$ shown in Figure~\ref{fig:spherical}, $S_T$ is the standard Seifert surface of the $(2,n+1)$ torus link. For $T=D_4$, we obtain the standard Seifert surface of the $(3,3)$ torus link. Yet another example is illustrated in Figure~\ref{fig:D5} below. The core curves (with a chosen orientation) of the Hopf bands used for the construction form a basis of $H_1(S,\Z)$. Relative to a basis, the Seifert quadratic form $q$ is a homogeneous polynomial of degree two in $r$ variables, where $r=\rk H_1(S,\Z)$ equals the number of vertices of $T$, or, equivalently, the number of Hopf bands used to construct $S_T$.

\begin{figure}[h]
\epsffile{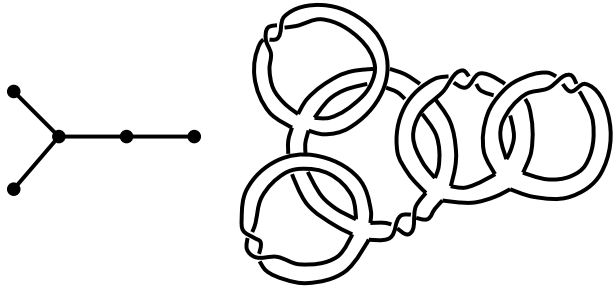}
\caption{The spherical tree $D_5$ and the corresponding Hopf plumbing $S_{D_5}$.\label{fig:D5}}
\end{figure}

\begin{remark}[on different notions of positivity]
There are different possible ways to define a Hopf band to be positive or negative. Here, by a {\em positive Hopf band} we mean an unknotted oriented band whose boundary link with the induced orientation is a {\em positive} Hopf link, so the core curve of a positive Hopf band has framing $-1$. Positive braid links are plumbings of positive Hopf bands. The subsequent statements can be translated into statements about negative Hopf plumbings.
\end{remark}

\section{Quadratic forms and Coxeter-Dynkin trees} \label{sec:quadraticforms}
Let $T$ be a finite tree with vertices $\{v_1,\ldots,v_n\}$. Use the same symbols $v_i$ to denote the basis of $H_1(S_T,\Z)\cong\Z^n$ consisting of the core curves of the plumbed Hopf bands. We define the matrix $A_T$ to be the symmetric integral $r\x r$-matrix whose diagonal entries are $2$'s and whose off-diagonal $ij$-th entry is $-1$ if $v_i$ and $v_j$ are connected by an edge in $T$ and $0$ otherwise. The orientations of the core curves may be chosen such that
\[ q_T(x_1v_1+\ldots+x_nv_n) = \frac12 x A_T x^\top,\quad \forall x=(x_1,\ldots,x_n)\in\Z^n. \]
Indeed, two non-adjacent Hopf bands being disjoint corresponds to the zero entries in $A_T$ and positive Hopf bands being $(-1)$-framed fits the diagonal entries. Finally, the core curves $v_i,v_j$ of two plumbed Hopf bands intersect exactly once and do not link otherwise. Thanks to the arborescent structure of the plumbing, we can choose orientations such that $\{\lk(v_i,v_j^+),\lk(v_j,v_i^+)\}=\{0,1\}$. In other words,
\[ -A_T=V+V^\top, \]
where $V$ is the Seifert matrix of $S_T$ with respect to the basis $v_1,\ldots,v_r$.

\noindent The quadratic form $q_T$ is
\begin{itemize}[leftmargin=*]
 \item positive definite if $T$ corresponds to a spherical Coxeter group,
 \item positive semidefinite if $T$ corresponds to an affine Coxeter group,
 \item indefinite otherwise.
\end{itemize}
Accordingly, we call a tree $T$ {\em spherical}, {\em affine}, {\em hyperbolic}. Compare Figure~\ref{fig:spherical} for a list of the spherical trees and Figure~\ref{fig:affine} for the affine trees. Any finite plane tree not appearing in one of these lists is hyperbolic. In fact, the so-called {\em slalom knots} introduced by A'Campo can be described as arborescent Hopf plumbings, and the slalom knots given by a hyperbolic tree are exactly the ones whose complements admit a complete hyperbolic metric of finite volume~\cite[Theorem~1]{AC2}.

As we already suggested in the introduction, there is a bijective correspondence between positive Hopf bands $H$ embedded in $S_T$ (up to isotopy in $S_T$) and $(-1)$-framed unknotted simple closed curves $\alpha\subset S_T$ (up to isotopy in $S_T$). The correspondence is given in one direction by assigning to a Hopf band $H$ its core curve $\alpha$, and in the other direction by setting $H$ to be a regular neighbourhood of $\alpha$ in $S_T$. Passing to homology classes, we can think of an element $x\in H_1(S_T,\Z)$ as the homology class of a positive Hopf band if and only if $x$ can be realised as the homology class of an unknotted simple closed curve in $S_T$ and $q_T(x)=1$.

\begin{definition}
We denote by $C_1(S_T)$ the set of homology classes of positive Hopf bands in $S_T$. Note that $C_1(S_T)\subset q_T^{-1}(1)\subset H_1(S_T,\Z)$.
\end{definition}

We complete this section with a few remarks that concern the above definition and which are important for the rest of the article.

\begin{remark} \label{rem:primitive}
If $\partial S_T$ is a knot, $x\in H_1(S_T,\Z)$ is representable by a simple closed curve if and only if it is {\em primitive}, i.e., if it cannot be written as a multiple of another vector (see~\cite[Proposition~6.2]{FM} for a proof in the closed case). In particular, any $x\in q_T^{-1}(1)$ can be realised by a (possibly knotted) simple closed curve if $\partial S_T$ is a knot.
\end{remark}

\begin{remark}
Let $T$ be a finite plane tree, $S_T$ the corresponding surface and $\phi:S_T\to S_T$ the monodromy. If $w\in H_1(S_T,\Z)$ is a homology class represented by an unknotted simple closed curve $\alpha$, then $\phi(\alpha)$ is again an unknotted simple closed curve, since the flow of the monodromy vector field describes an isotopy from $\alpha$ to $\phi(\alpha)$ in $S^3$. In addition, $\phi(\alpha)$ represents the homology class $\phi_*(w)$, whose framing equals the framing of $w$. In particular, $w\in C_1(S_T)$ iff $(\phi_*)^n(w)\in C_1(S_T)$, $\forall n\in\Z$.
\end{remark}

\begin{remark}
If $T'$ is a subtree of a tree $T$ (that is, $T'$ is obtained from $T$ by contracting edges), $S_{T'}$ can be viewed as a subsurface of $S_T$ in such a way that the map on homology induced by the inclusion is injective. In particular, $C_1(S_{T'})$ can be identified with a subset of $C_1(S_T)$.
\end{remark}

\begin{remark} \label{rem:standard}
We immediately spot a certain number of Hopf bands in an arborescent Hopf plumbing $S_T$, such as the Hopf bands corresponding to the vertices $v_i$ of $T$ that were used in the Hopf plumbing construction. Based on this observation we say that $x\in C_1(S_T)$ is a {\em standard Hopf band} if $x\in C_1(S_{T'})$ for some subtree $T'\subset T$ of type $A_n$. See Figure~\ref{fig:A5} for an example and the paragraph after Proposition~\ref{prop:spherical} for a complete description of all standard Hopf bands in $S_{A_n}$.
\end{remark}

\begin{figure}[h]
\epsffile{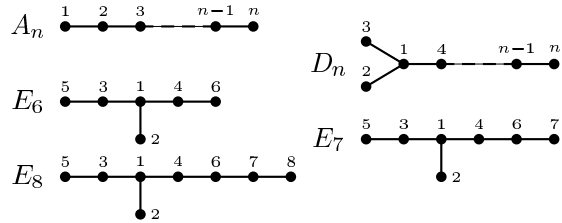}
\caption{The simply laced spherical Coxeter trees correspond to finite Coxeter groups. The numbers indicate the order of the chosen homology basis vectors.\label{fig:spherical}}
\end{figure}



\section{The spherical Coxeter-Dynkin trees} \label{sec:spherical}
If $T$ is one of the spherical trees $A_n,D_n,E_6,E_7$ or $E_8$ depicted in Figure~\ref{fig:spherical}, the quadratic form $q_T$ is positive definite. Therefore, the equation $q_T(x)=k$ has only finitely many integral solutions, for any fixed $k$ and in particular for $k=1$. In the rest of this section, we explicitely determine all solutions to $q_T(x)=1$ for each of the spherical trees. These were already studied and classified in the context of Lie algebra theory and Coxeter groups, see for example the book by Humphreys~\cite{Hu}. The following proposition summarises the results of this section (see Remark~\ref{rem:standard} for the definition of a standard Hopf band).

\begin{proposition} \label{prop:spherical}
If $T$ is a spherical tree, then the set of integral solutions to $q_T(x)=1$ is finite. Moreover, every solution is contained in the orbit of a standard Hopf band under the monodromy and is therefore realisable as an unknotted simple closed curve in $S_T$. In particular, $C_1(S_T)=q_T^{-1}(1)$.
\end{proposition}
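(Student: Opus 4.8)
=== PROOF PROPOSAL PLAN ===

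The plan is to treat each spherical tree separately, exploiting the positive definiteness of $q_T$ to reduce everything to a finite computation, but organising that computation so it reveals the orbit structure rather than just a blind enumeration. First I would observe that positive definiteness of $q_T$ (which holds precisely for the spherical trees, by the dichotomy recalled in Section~\ref{sec:quadraticforms}) immediately gives finiteness of $q_T^{-1}(1)$: the set $\{x : q_T(x)\le 1\}$ is a bounded region of $\R^n$ and hence contains finitely many lattice points. For each of the families $A_n$, $D_n$ and the three exceptional trees, $q_T$ is (up to the factor $\tfrac12$) the Cartan matrix quadratic form of the corresponding finite root system, so the integral solutions of $q_T(x)=1$ are exactly the roots of that system, written in the basis of simple roots $v_1,\dots,v_n$. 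This is classical (see Humphreys~\cite{Hu}); I would either cite it or, to keep the paper self-contained, verify it directly for $A_n$ and $D_n$ by an elementary argument (for $A_n$, solutions correspond to intervals $[i,j]$ of consecutive vertices, i.e.\ $v_i+v_{i+1}+\cdots+v_j$; for $D_n$ a similarly short list) and by a short finite search for $E_6,E_7,E_8$.

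The substantive point is the second assertion: every such solution lies in a monodromy orbit of a \emph{standard} Hopf band, i.e.\ of a class coming from an $A_k$-subtree. For $T=A_n$ this is almost tautological once one identifies the solutions: the "obvious" Hopf bands $v_i+v_{i+1}+\cdots+v_j$ are by definition standard (they live in the $A_{j-i+1}$-subtree on vertices $i,\dots,j$), so there is nothing to prove, and I would record explicitly — as promised in Remark~\ref{rem:standard} — that these $\binom{n+1}{2}$ classes are \emph{all} of $q_{A_n}^{-1}(1)$. For $D_n$, $E_6$, $E_7$, $E_8$ the roots split into finitely many orbits under the Weyl group, but here the relevant group is the cyclic group generated by the monodromy $\phi_*$, whose matrix is $M=V^{-\top}V$ with $-A_T=V+V^\top$; up to sign this is a Coxeter element of the Weyl group, and a Coxeter element acts on the root system with all orbits of the same size $h$ (the Coxeter number), with $n$ orbits total. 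So the plan is: (i) compute the Seifert matrix $V$ from $A_T$ using the arborescent orientation convention, hence $M$; (ii) exhibit one standard Hopf band (a short $A_k$-subtree class, e.g.\ a single $v_i$ or a small sum) in each of the $n$ $\phi_*$-orbits; (iii) conclude by a counting/covering argument that these orbits exhaust $q_T^{-1}(1)$, using that the number of roots $= n\cdot h$ and each orbit has size exactly $h$ because $\phi_*$ (being essentially a Coxeter element) has order $h$ on the root lattice and acts freely on roots. Finally, realisability as an unknotted simple closed curve then follows from the Remark that $\phi$ carries unknotted s.c.c.'s to unknotted s.c.c.'s, since each standard Hopf band manifestly is one; and $C_1(S_T)=q_T^{-1}(1)$ follows because every solution is both primitive (a root is primitive) and, being in the orbit of a standard Hopf band, honestly a Hopf band.

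I expect the main obstacle to be step (iii) in the exceptional cases $E_6,E_7,E_8$: verifying that the cyclic group $\langle\phi_*\rangle$ really does act with exactly $n$ orbits each of size $h$ on the $n\cdot h$ roots. Two things can go wrong — the monodromy might not be conjugate to an honest Coxeter element (the arborescent plumbing fixes a cyclic ordering of edges at each vertex, which pins down a specific bipartite-type Coxeter element, so this should be fine but needs checking), and even a Coxeter element can have a root fixed up to sign, requiring care about whether we work with roots or with $\pm$-classes. To sidestep delicate representation theory I would fall back, if needed, on a direct finite computation: list all roots of $E_8$ (there are $240$, in $8$ orbits of size $30$), compute the $\phi_*$-orbit of one short standard class in each, and check the union is everything — entirely mechanical, and $E_6$, $E_7$ are smaller. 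The $A_n$ and $D_n$ cases I expect to be routine by the explicit description of solutions above, so the write-up would spend most of its length on $D_n$ (to get the pattern) and then handle $E_6,E_7,E_8$ either by citing the Coxeter-element orbit structure from~\cite{Hu} or by the finite check just described.
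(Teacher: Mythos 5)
Your plan is correct in outline, and its ``fallback'' is in essence the paper's actual proof: the paper deduces finiteness from positive definiteness, determines $q_T^{-1}(1)$ for each of $A_n,D_n,E_6,E_7,E_8$ by writing $q_T$ explicitly as a positive combination of squares and doing an elementary case analysis (equivalent to your identification of the solutions with the roots of the corresponding root system), and then verifies by direct computation with $M=V^{-\top}V$ that every solution lies, up to sign, in the $M$-orbit of an explicitly listed standard Hopf band (for $D_n$ via two closed-form relations involving powers $M^{r+1},M^{r+2}$; for $E_6,E_7,E_8$ by listing orbit representatives such as $v_1$, $v_1+v_3$, $v_1+v_4+v_6$). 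Your final step is the same as the paper's: unknotted realisability follows because the monodromy carries unknotted simple closed curves to unknotted simple closed curves and standard Hopf bands are unknotted; signs are harmless since $C_1(S_T)$ and the set of standard classes are closed under negation.

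The genuinely different ingredient you propose -- Kostant's theorem that a Coxeter element acts on the $nh$ roots with $n$ free orbits of size $h$ -- does not apply as stated, and the problem is sharper than your cautionary remark suggests. With the paper's conventions the homological monodromy is $-c$ rather than a Coxeter element $c$ (already for $A_1$: $V=(-1)$ gives $M=\mathrm{id}=-s_1$; for $A_2$ the monodromy has order $6$, not $h=3$), and $\langle -c\rangle$ need not have the same orbit structure on roots as $\langle c\rangle$. For $E_8$, where $h=30$ and $c^{15}=-1$, one finds $M=c^{16}$, of order $15$, so the $240$ roots split into $16$ $M$-orbits of size $15$, not $8$ of size $30$ (similarly the order is $9$ for $E_7$), and the covering-by-counting in your step (iii) fails numerically. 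It can be repaired by passing to $\pm$-classes: the union of the $M$-orbits of $w$ and $-w$ is invariant under $-c$ and $-1$, hence under $c$, so it suffices to exhibit a standard class in every $\langle c,-1\rangle$-orbit; but locating those representatives is again a finite verification of essentially the same size as the paper's explicit check, so the repaired argument ends up coinciding with the paper's computation. (A minor slip: $q_{A_n}^{-1}(1)$ has $n(n+1)$ elements, the interval classes and their negatives, not $n(n+1)/2$.)
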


First let $T=A_n$. The associated quadratic form $q$ then takes the following form with respect to the basis of $H_1(S_T,\Z)\cong\Z^n$ described above:
\begin{eqnarray}
 q(x) &=& x_1^2+\ldots+x_n^2 \ - x_1x_2 - x_2x_3 - \ldots - x_{n-1}x_n \nonumber \\
      &=& \frac12 ((x_1-x_2)^2 + \ldots + (x_{n-1}-x_n)^2 + x_1^2 + x_n^2) \nonumber
\end{eqnarray}
For any integral solution $x$ to $q(x)=1$, necessarily $|x_1|,|x_n|\leq 1$. Therefore, we obtain
\[ C_1(S_{A_n}) = \{ \pm({\bf 0}_r,{\bf 1}_s,{\bf 0}_t)\in\Z^n\ | \ r,t\geq 0,s\geq 1 \}, \]
where ${\bf c}_\nu$ stands for $\nu$ consecutive occurences of the number $c$. It is easily seen that all $n(n+1)$ elements of the above set can be represented by unknotted simple closed curves in $S_{A_n}$, see Figure~\ref{fig:A5} for an example.

\begin{figure}[h]
\epsffile{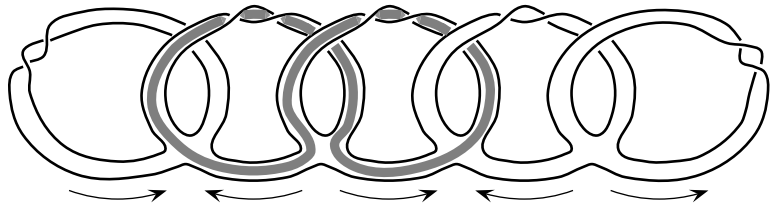}
\caption{A standard Hopf band (in grey) in $S_{A_5}$, representing the homology class $(0,1,1,0,0)$ with respect to the homology basis described above. The arrows indicate the chosen orientations of the homology basis vectors.\label{fig:A5}}
\end{figure}

\noindent For $T=D_n$, one finds:
\begin{eqnarray}
 q(x) &=& x_1^2+\ldots+x_n^2 \ - x_1(x_2+x_3+x_4) - x_4x_5 - \ldots - x_{n-1}x_n \nonumber \\
      &=& (\frac12 x_1-x_2)^2 + (\frac12 x_1-x_3)^2 + \frac12(x_1-x_4)^2 \nonumber \\
      &&  +\frac12((x_4-x_5)^2 + \ldots + (x_{n-1}-x_n)^2) + \frac12 x_n^2 \nonumber
\end{eqnarray}

Let $x\in\Z^n$ be a solution of $q(x)=1$. Then $|x_n|\leq 1$, since otherwise the last summand would already be larger than one. If $x_n=0$, then $x\in C_1(S_{D_{n-1}})$. Otherwise, we may assume $x_n=1$, up to changing the sign of $x$. If $x_1=0$, then $x_2,x_3$ cannot be both nonzero, so we can view $x\in C_1(S_{A_{n-1}})$. Hence we may assume $x_1\neq 0$. If $x_1$ is odd, then the first two squares are in $\frac14\N\setminus\N$ while the last summand is equal to $\frac12$. This implies that the rest vanishes, i.e., $x_1=x_4=x_5=\ldots = x_n = 1$, and hence $x_2,x_3\in\{0,1\}$. If $x_1$ is even, the first two squares are in $\N$ and must therefore vanish, while
\[ (x_1-x_4)^2+(x_4-x_5)^2+\ldots + (x_{n-2}-x_{n-1})^2+(x_{n-1}-1)^2 = 1. \]
This implies that exactly one of these squares equals one and the rest vanishes, so we obtain the following solutions (recall that $x_1\neq 0$ by assumption).
\[ x=(2,1,1,{\bf 2}_r,{\bf 1}_s)\in\Z^n,\quad r\geq 0,\ s\geq 1 \]
If $M$ denotes the matrix of the homological action of the monodromy with respect to the basis $v_1,\ldots,v_n$, the following relations hold for $r\geq 0$, $s\geq 1$.
\begin{eqnarray}
 ({\bf 1}_{r+3},{\bf 0}_{n-r-3})^\top               &=& (-1)^{r+1} M^{r+2}({\bf 0}_{n-r-1},{\bf 1}_{r+1})^\top \nonumber \\
 (2,1,1,{\bf 2}_r,{\bf 1}_s,{\bf 0}_{n-r-s-3})^\top &=& (-1)^{r+1} M^{r+1}(1,0,0,{\bf 1}_{s-1},{\bf 0}_{n-s-2})^\top \nonumber
\end{eqnarray}
Hence all solutions to $q(x)=1$ are contained in orbits of standard Hopf bands under the monodromy and are therefore realisable by unknotted simple closed curves in $S_T$. In total, we obtain
\begin{eqnarray}
  C_1(S_{D_n})\! &=& \! \{ \pm (2,1,1,{\bf 2}_r,{\bf 1}_s,{\bf 0}_t)\in\Z^n \ | \ r,t\geq 0,\ s\geq 1\} \nonumber \\
                 && \cup\ \{ \pm (1,x_2,x_3,{\bf 1}_r,{\bf 0}_s)\in\Z^n \ | \ x_2,x_3\in\{0,1\},\ r,s\geq 0 \} \nonumber \\
                 && \cup\ \{ \pm ({\bf 0}_r,{\bf 1}_s,{\bf 0}_t)\in\Z^n \ | \ r\geq 3,\ s\geq 1\}\ \cup \ \{ \pm v_2, \pm v_3 \}. \nonumber 
\end{eqnarray}
A combinatorial calculation shows that $\#C_1(S_{D_n})=2n(n-1)$.

\noindent For $T=E_6$, the quadratic form $q$ takes the following form.
\begin{eqnarray}
  q(x)\!\!\! &=& \!\!\!x_1^2+\ldots+x_6^2 \ - x_1(x_2+x_3+x_4) - x_3x_5 - x_4x_6 \nonumber \\
             &=& \!\!\! (\frac12 x_1 - x_2)^2  + \frac13 ((x_1 - \frac32 x_3)^2  +  (x_1 - \frac32 x_4)^2) \nonumber \\
             && \!\!\! + (\frac12 x_3 - x_5)^2  +  (\frac12x_4 - x_6)^2  +  \frac{1}{12} x_1^2 \nonumber
\end{eqnarray}
$E_6$ contains $A_5$ and $D_5$ as subtrees, whose sets of Hopf bands we know. Let $x$ be a solution of $q(x)=1$ different from these (in particular, $x_2,x_5,x_6\neq 0$). From the condition $\frac{1}{12}x_1^2\leq 1$ we obtain $|x_1|\leq 3$. If $x_1=0$, at most one of $x_2,x_3,x_4$ can be nonzero. It follows that $x$ is supported in one of the arms of the tree, which we excluded. Therefore we may assume $x_1\in\{1,2,3\}$ (up to changing the sign of $x$). If $x_1=3$, then $\frac{1}{12}x_1^2=\frac34$ and $(\frac12 x_1-x_2)^2\in\frac14\N\setminus\N$. So $x_2\in\{1,2\}$ and $x_3=x_4=2$, $x_5=x_6=1$, hence
\[ x=(3,x_2,2,2,1,1), \quad x_2\in\{1,2\}. \]
If $x_1=2$, then $x_2=1$ (otherwise $(\frac12x_1-x_2)^2+\frac{1}{12}x_1^2>1$), and similarly $x_3,x_4\in\{1,2\}$, which implies $x_5=x_6=1$ (remember we excluded $x_5=0$ or $x_6=0$). This yields the four possibilities
\[ x=(2,1,x_3,x_4,1,1),\quad x_3,x_4\in\{1,2\}. \]
Finally, if $x_1=1$, we can successively deduce $|x_i|\leq 1$ for $i=2,\ldots,6$, hence
\[ x=(1,1,1,1,1,1). \]
It turns out that all of these homology classes can be realised by unknotted simple closed curves. In fact, they all lie, up to sign, in the orbits of $v_1, v_1+v_2,v_1+v_3,v_1+v_4$ under the monodromy. In total, we have $14$ vectors plus $2\cdot(\#C_1(S_{D_5}))-\#C_1(S_{D_4})+2=58$ vectors coming from the subtrees $D_5$ and $A_5$, so $\# C_1(S_{E_6})=72$.

\noindent For $E_7$, we obtain
\begin{eqnarray}
  q(x)\!\!\! &=& \!\!\!x_1^2+\ldots+x_7^2 \ - x_1(x_2+x_3+x_4) - x_3x_5 - x_4x_6 - x_6x_7 \nonumber \\
             &=& \!\!\! (\frac12 x_1 - x_2)^2  + \frac13 ((x_1 - \frac32 x_3)^2  +  (x_4 - \frac32 x_6)^2) \nonumber \\
             && \!\!\! +  \frac23 (\frac34 x_1 - x_4)^2  +  (\frac12x_3 - x_5)^2  +  (\frac12 x_6 - x_7)^2  +  \frac{1}{24} x_1^2 \nonumber
\end{eqnarray}
The vectors $x$ satisfying $q(x)=1$ that are not supported on the $D_6$ or $E_6$ subtrees are (up to sign)
\[ \begin{array}{lll}
    (4,2,3,3,x_5,2,1),&(3,x_2,2,x_4,1,2,1),&(2,1,x_3,2,1,x_6,1),\\
    (4,2,2,3,1,2,1),  &(3,x_2,2,2,1,1,1),  &(2,1,x_3,1,1,1,1), \\
    (1,1,1,1,1,1,1),&&
   \end{array} \]
where $x_2,x_3,x_5,x_6\in\{1,2\}$ and $x_4\in\{2,3\}$ can be freely chosen. As before, these homology classes are contained in the orbits of the Hopf bands $v_1$, $v_2$, $v_3$, $v_1+v_3$, $v_1+v_4$, $v_4+v_6$, $v_1+v_4+v_6$ under the monodromy, hence realisable by unknotted simple closed curves. A count of elements yields $\#C_1(S_{E_7})=126$.

\noindent Finally, for $E_8$
\begin{eqnarray}
  q(x)\!\!\! &=& \!\!\!x_1^2+\ldots+x_8^2 \ - x_1(x_2+x_3+x_4) - x_3x_5 - x_4x_6 - x_6x_7 - x_7x_8 \nonumber \\
             &=& \!\!\! (\frac12 x_1 - x_2)^2  + \frac13 ((x_1 - \frac32 x_3)^2  +  (x_6 - \frac32 x_7)^2) +  \frac25 (x_1 - \frac54x_4)^2 \nonumber \\
             && \!\!\! + \frac38(x_4 - \frac43x_6)^2  +  (\frac12 x_3 - x_5)^2  + (\frac12x_7 - x_8)^2 +  \frac{1}{60} x_1^2 \nonumber
\end{eqnarray}
The vectors not supported on the $D_7$ or $E_7$ subtrees are
\[ \begin{array}{lll}
    (6,3,4,5,2,4,3,x_8),&(6,3,4,5,2,x_5,2,1),&(6,3,4,4,2,3,2,1),\\
    (5,x_2,4,4,2,3,2,1),&(5,x_2,3,4,y_5,3,2,1),&(4,2,3,4,y_5,3,2,1),\\
    (4,2,2,4,1,3,2,1),&(4,2,3,3,y_5,3,2,1),&(4,2,3,3,y_5,2,x_7,1),\\
    (4,2,2,3,1,x_6,2,1),&(4,2,2,3,1,2,1,1),&(3,y_2,2,3,1,3,2,1),\\
    (3,y_2,2,3,1,2,x_7,1),&(3,y_2,2,2,1,2,x_7,1),&(3,y_2,2,2,1,1,1,1),\\
    (2,1,x_3,2,1,2,x_7,1),&(2,1,x_3,x_4,1,1,1,1),&(1,1,1,1,1,1,1,1),
\end{array} \]
where $x_3,x_4,x_7,x_8,y_2,y_5\in\{1,2\}$, $x_2,x_6\in\{2,3\}$, $x_5\in\{3,4\}$. Again, these vectors all lie in the orbits of $v_1$, $v_2$, $v_3$, $v_4$, $v_1+v_2$, $v_1+v_3$, $v_1+v_4$, $v_6+v_7$, $v_7+v_8$, $v_1+v_4+v_6$ under the monodromy and are therefore represented by unknotted simple closed curves. Together with the vectors supported on the $D_7$ and $E_7$ subtrees, they add up to a total count of $\# C_1(S_{E_8})=240$.

\section{The affine Coxeter-Dynkin trees} \label{sec:affine}
\begin{proposition} \label{prop:affine}
If $T$ is an affine tree, the set of solutions to $q_T(x)=1$ contains at least one infinite orbit of a standard Hopf band under the monodromy. In particular, the set $C_1(S_T)$ is infinite. More precisely, there exist vectors $u,w_1,\ldots,w_d\in H_1(S_T,\Z)$, such that every solution $x$ to $q_T(x)=1$ is of the form $x=w_i+ku$ for some $k\in\Z$, $i\in\{1,\ldots,d\}$.
\end{proposition}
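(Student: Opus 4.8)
The plan is to exploit the structure of affine quadratic forms: $q_T$ is positive semidefinite with a one-dimensional radical (kernel of the associated bilinear form), spanned by a primitive vector $u\in H_1(S_T,\Z)$. Concretely, $u$ is the well-known ``highest root'' vector whose coordinates are the marks of the affine Dynkin diagram; one has $A_T u^\top=0$, hence $q_T(x+ku)=q_T(x)$ for all $x\in\Z^n$, $k\in\Z$. Thus the solution set $q_T^{-1}(1)$ is a union of cosets of $\Z u$, and the content of the proposition is that there are only finitely many such cosets, i.e. that the image of $q_T^{-1}(1)$ in the quotient lattice $\Lambda:=\Z^n/\Z u$ is finite.

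**First I would** pass to the quotient $\Lambda\cong\Z^{n-1}$. Since $u$ is primitive, $\Lambda$ is free of rank $n-1$, and $q_T$ descends to a function $\bar q$ on $\Lambda$ because it is $\Z u$-invariant. The key point is that the induced bilinear form on $\Lambda\otimes\mathbb R$ is \emph{positive definite}: the radical of $q_T$ is exactly $\mathbb R u$, so modding out by it kills the degeneracy. Therefore $\bar q$ is a positive definite integral-valued quadratic form on a lattice of rank $n-1$, and $\bar q^{-1}(1)$ is finite — this is the same compactness argument already used for the spherical trees (a positive definite form takes any given value on only finitely many lattice points). Pulling back, $q_T^{-1}(1)$ is a finite union of $\Z u$-cosets; choosing one representative $w_i$ from each coset gives the asserted description $x=w_i+ku$.

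**Next I would** identify $u$ with (a curve representing) a standard Hopf band and check that its monodromy orbit inside $q_T^{-1}(1)$ is infinite, which gives the ``at least one infinite orbit'' clause. For each affine tree one picks an explicit standard Hopf band $h$ (supported on an $A_m$-subtree, e.g. a single vertex $v_i$) with $q_T(h)=1$; then $h, h+u, h+2u,\dots$ all satisfy $q_T=1$, and one must exhibit these as lying in a single $\phi_*$-orbit. This is where one uses that the monodromy acts on the radical: since $M=V^{-\top}V$ and $-A_T=V+V^\top$, one computes $M u^\top = u^\top$ (the monodromy fixes the radical vector, being parabolic/affine), so the Coxeter element acts on the coset $h+\Z u$ as a translation $h\mapsto h+cu$ for some nonzero integer $c$ — exactly the phenomenon visible in the $D_n$ relations displayed in the spherical section. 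Concretely one verifies, tree by tree or uniformly, that $M^N h^\top = h^\top + c\,u^\top$ with $c\neq 0$ for suitable $N$, giving the infinite orbit.

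**The hard part will be** the last step: showing the translation constant $c$ is nonzero, equivalently that the monodromy does not act trivially on the coset of a chosen standard Hopf band. The degeneracy of $q_T$ means the homological monodromy $M$ is no longer of finite order (its characteristic polynomial has a repeated root at $1$ with a nontrivial Jordan block along $u$), and one needs to pin down the Jordan structure precisely enough to see the nontrivial shift. I expect this to require either a small case-by-case computation with the explicit Seifert matrices $V$ for $\tilde D_n, \tilde E_6, \tilde E_7, \tilde E_8$ — feasible since these are low rank — or a cleaner argument identifying the $\phi$-action on $\Z^n/(\text{definite part})$ with the affine Weyl group translation lattice, where non-triviality of the translation is classical. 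The finiteness statement itself is routine once the positive-definiteness of $\bar q$ on $\Lambda$ is in hand; the subtlety is purely in producing an honest infinite orbit rather than just infinitely many cosets.
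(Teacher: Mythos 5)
Your overall strategy is essentially the paper's: exploit the one-dimensional radical $\Z u$ of $q_T$, observe $q_T(x+ku)=q_T(x)$, get finitely many cosets, and then produce an infinite monodromy orbit of a standard Hopf band by an explicit relation of the form $M^N v_1 = \pm(v_1 + cu)$ with $c\neq 0$. Your finiteness step is a mild variant: you pass to the quotient lattice $\Z^n/\Z u$, where the induced form is positive definite, whereas the paper uses the fact that some coordinate of $u$ equals $\pm 1$ to translate every solution into the spherical subtree $T'$, so that the coset representatives $w_i$ are literally the (finitely many) elements of $q_{T'}^{-1}(1)$ already classified in the spherical section. Both are correct; the paper's choice has the advantage that the $w_i$ are explicit homology classes of Hopf bands in $S_{T'}$, which is what its subsequent discussion uses.

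Two points to fix or complete. First, a concrete error: the monodromy does \emph{not} fix the radical vector. From $-A_T=V+V^\top$ and $A_Tu=0$ one gets $Vu=-V^\top u$, hence $Mu=V^{-\top}Vu=-u$; this is also visible directly from the displayed matrix $M$ for $\tilde D_4$ applied to $u=(2,1,1,1,1)$. (The genuine affine Coxeter element fixes $u$, but the homological monodromy here differs from it by a sign.) This does not break your plan -- the relations one actually finds are $M^{n-2}v_1=(-1)^n(v_1+2u)$ for $\tilde D_n$, $M^2v_1=v_1+u$ for $\tilde E_6$, $M^3v_1=-v_1-u$ for $\tilde E_7$, $M^5v_1=-v_1-u$ for $\tilde E_8$, and combined with $Mu=-u$ they still yield $M^{Nk}v_1=\pm(v_1+cku)$, hence an infinite orbit -- but the claim $Mu=u$ as stated is false and the sign bookkeeping is needed. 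Second, the step you label the hard part (showing the translation constant $c$ is nonzero) is exactly the substance the paper supplies, by the case-by-case computations just quoted; your proposal correctly identifies that this must be verified for each of $\tilde D_n,\tilde E_6,\tilde E_7,\tilde E_8$ and that it is a finite, low-rank computation with the explicit Seifert matrices, but as written it is asserted rather than carried out, so the ``at least one infinite orbit of a standard Hopf band'' clause is not yet proved in your text.
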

\begin{figure}[h]
\epsffile{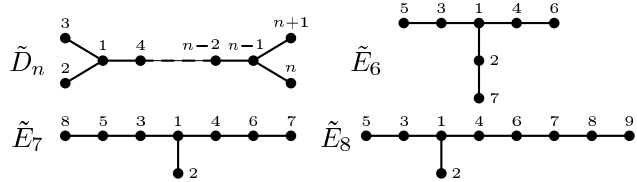}
\caption{The simply laced affine Coxeter trees.\label{fig:affine}}
\end{figure}
Let $T$ be an affine tree and let $n$ be the number of vertices of $T$. Observe that $T$ is obtained by adding one edge and one vertex $v_n$ to a suitable spherical subtree $T'$. In terms of the associated quadratic forms $q_T$, $q_{T'}$ this means: there exists a subspace of codimension one on which $q_T$ has a positive definite restriction, i.e., the radical $\ker A_T$ of $q_T(x)=\frac12 x A_T x^\top$ is one-dimensional. Let $u\in\Z^n$ be such that $\Z u=\ker A_T$. Observe that
\[ q_T(w+ku)=q_T(w)+k\, w A_T u^\top+k^2q_T(u)=q_T(w), \]
for all $w\in\Z^n$, $k\in\Z$. Let $V$ be the Seifert matrix of $S_T$ with respect to the basis $v_1,\ldots,v_n$ of $H_1(S_T,\Z)$ and denote by $M$ the matrix of the monodromy of $S_T$ with respect to the same basis. As mentioned in Section~\ref{subsec:monodromy} and in Section~\ref{sec:quadraticforms} respectively, the Seifert matrix $V$ is invertible and the following relations hold.
\[ M=V^{-\top}V,\quad -A_T=V+V^\top \]
Since $u\in\ker A_T$, we have $Vu=-V^\top u$ and therefore $Mu=-u$. Moreover, we will see that the last coordinate of $u$ (corresponding to the vertex $v_n$ of $T$) equals $\pm 1$. Therefore, $k\in\Z$ can be chosen such that $w+ku$ is supported in $T'$, so
\[ q_T^{-1}(1)=\{ w+ku\ | \ w\in q_{T'}^{-1}(1)\x\{0\},\ k\in\Z \}. \]
To simplify notation, we identify the vector $(x_1,\ldots,x_{n-1})\in\Z^{n-1}$ with the vector $(x_1,\ldots,x_{n-1},0)\in\Z^n$. It should be clear from the context when a last entry equal to zero is to be deleted from a vector and when a zero should be appended to a vector.

The smallest example of an affine tree $T$ is the the ``X''-shaped tree $\tilde{D_4}$ with five vertices $v_1,\ldots,v_5$ where $v_1$ has degree four and $v_2,\ldots,v_5$ have degree one (compare Figure~\ref{fig:affine}). In that case, $q(x)=q_T(x)$ can be written as a sum of four squares:
\begin{eqnarray} 
  q(x) &=& x_1^2+x_2^2+x_3^2+x_4^2+x_5^2\ - x_1(x_2+x_3+x_4+x_5) \nonumber \\
       &=& (\frac12 x_1-x_2)^2 + (\frac12 x_1-x_3)^2 + (\frac12 x_1-x_4)^2 + (\frac12 x_1-x_5)^2 \nonumber
\end{eqnarray}
One easily finds that the vector $u=(2,1,1,1,1)$ spans the radical. The subtree whose vertices are $v_1,\ldots,v_4$ is of type $D_4$. Therefore, all solutions $x$ to $q(x)=1$ are of the form $x=w+ku$, where $k\in\Z$ and $w\in C_1(S_{D_4})\subset\Z^ 4$. We know from the previous section on spherical trees that $C_1(S_{D_4})$ consists of the following vectors, up to sign and up to permutation of the last three entries:
\begin{eqnarray}
 w_1 = (2,1,1,1), && w_2 = (1,1,1,1),\quad w_3 = (1,1,1,0), \nonumber \\
 w_4 = (1,1,0,0), && w_5 = (1,0,0,0),\quad w_6 = (0,1,0,0). \nonumber
\end{eqnarray}
The matrix $M$ of the (homological) monodromy with respect to the basis $v_1,\ldots,v_5$ of $H_1(S_T,\Z)$ can be computed using the formula in terms of the Seifert matrix $V$ mentioned above. Concretely:
\[ M=\left( \begin{array}{rrrrr} -3&1&1&1&1\\ -1&1&&& \\ -1&&1&& \\ -1&&&1& \\ -1&&&&1 \end{array} \right) \]
The following relations hold:
\[ \begin{array}{lll}
 M^2(w_1) = w_1+u, & \ M^2(w_2) = w_2-u,  & \framebox{$M^2(w_3)=w_3$} \\[1ex]
 M^2(w_4) = w_4+u, & \ M^2(w_5) = w_5+2u, & \ M^2(w_6)=w_6-u
\end{array} \]
We will now deduce from these relations that the set of homology classes of Hopf bands $C_1(S_T)$ decomposes into finitely many orbits under the action of the monodromy $M$. Indeed, the above relations imply that the families $\{w_i+ku\}_{k\in\Z}$, $i\neq 3$, fall into finitely many orbits. On the other hand, each of the vectors $w^{(k)}:=w_3+ku$, $k\in\Z$, is fixed by $M^2$, so they cannot be contained in a finite union of $M$-orbits. However, $w^{(k)}\notin C_1(S_T)$ for $k\neq 0,-1$, since these homology classes cannot be represented by a simple closed curve in $S_T$, as we demonstrate now. $S_T$ is a surface of genus one with four boundary components. If we forget about the embedding of $S_T\subset S^3$, we can abstractly glue three disks to cap off all but one boundary component. The result is an abstract (non-embedded) once punctured torus. Therein, a nonzero first homology class $c$ can be represented by a simple closed curve if and only if $c$ is a primitive vector, that is, if $c=\lambda c'$ implies $|\lambda|=1$ for $\lambda\in\Z$ (compare Remark~\ref{rem:primitive}). To make use of this criterion, we change the basis $(v_1,\ldots,v_5)$ of $H_1(S_T,\Z)$ to the basis $(v_1,v_2,v_2-v_3,v_3-v_4,v_4-v_5)$. The last three elements of this new basis can be represented by three of the four boundary curves of $S_T$. Therefore, capping off these three boundary components has the effect of deleting the last three entries of the corresponding coordinate vectors. Rewriting $w^{(k)}$ in the new coordinates yields the vector
\[ (2k+1,4k+2,-(3k+1),-2k,-k). \]
Under the inclusion of $S_T$ into the capped-off surface, we obtain the vector
\[ (2k+1,4k+2)=(2k+1)\cdot(1,2), \]
which is primitive for $k\in\{0,-1\}$ only. Hence $w^{(k)}\notin C_1(S_T)$ for $k\neq 0,-1$ and the set $C_1(S_T)$ decomposes into finitely many orbits under $M$.

\noindent For the other members of the $\tilde{D_n}$ family ($n\geq 5$), we obtain
\begin{eqnarray}
 q(x) &=& (\frac12 x_1-x_2)^2+(\frac12 x_1-x_3)^2 \nonumber \\
      & & +\frac12\left( (x_1-x_4)^2+(x_4-x_5)^2+\ldots +(x_{n-2}-x_{n-1})^2\right) \nonumber \\
      & & +(\frac12 x_{n-1}-x_n)^2+(\frac12 x_{n-1}-x_{n+1})^2 \nonumber
\end{eqnarray}
whose radical is spanned by $u=(2,1,1,{\bf 2}_{n-4},1,1)$. As for $\tilde{D_4}$, the solutions to $q(x)=1$ are the vectors of the form $w_i+ku$, where $k\in\Z$ and $w_i\in C_1(S_{D_n})$, $i\in\{1,\ldots,2n(n-1)\}$, are the homology classes of Hopf bands supported in the $D_n$ subtree of $\tilde{D_n}$. A calculation with the monodromy matrix $M$ and the first standard Hopf band $v_1$ shows that $M^{n-2}v_1=(-1)^n (v_1+2u)$. Since $Mu=-u$, this implies
\[ M^{(n-2)k}v_1=(-1)^{nk} (v_1+2ku),\quad \forall k\in\Z. \]
Therefore, there is at least one infinite orbit of homology classes of Hopf bands in $C_1(S_T)$. In contrast, there do exist $w_i$ such that the family $\{w_i+ku\}_{k\in\Z}$ does not fall into finitely many orbits under the monodromy and still consists of homology classes of simple closed curves. For example, the vectors $w=({\bf 0}_r,{\bf 1}_s,{\bf 0}_t)\in\Z^{n+1}$, $3\leq r\leq n-1$, $1\leq s\leq n-r-1$, are in fact all fixed by $M^{2(n-2)}$, and $w+ku$ is realisable as a simple closed curve, for every $k\in\Z$. However, we do not know whether it can be realised as an {\em unknotted} simple closed curve for $k\notin\{0,-1\}$. The same situation occurs for $\tilde{E_6}$, $\tilde{E_7}$ and $\tilde{E_8}$. For $T=\tilde{E_6}$, the corresponding quadratic form $q$ can be written as follows. 
\begin{eqnarray}
  q(x) &=& \frac13 \left( (x_1-\frac32 x_2)^2+(x_1-\frac32 x_3)^2+(x_1-\frac32 x_4)^2\right) \nonumber \\
       & & + (\frac12 x_2-x_7)^2+(\frac12 x_3-x_5)^2+(\frac12 x_4-x_6)^2 \nonumber 
\end{eqnarray}
The radical of $q$ is spanned by the vector $u=(3,2,2,2,1,1,1)$, and $M^2v_1=v_1+u$, where $M$ and $v_1$ denote again the monodromy and the first standard Hopf band, respectively. This implies that the orbit of $v_1$ under the monodromy is infinite.
For the tree $\tilde{E}_7$, we have:
\begin{eqnarray}
  q(x) &=& (\frac12 x_1-x_2)^2 + (\frac12 x_5-x_8)^2+(\frac12 x_6-x_7)^2 \nonumber \\
       & & + \frac23 \left( (\frac34 x_1-x_3)^2+(\frac34 x_1-x_4)^2\right) \nonumber \\
       & & + \frac13 \left( (x_3-\frac32 x_5)^2+(x_4-\frac32 x_6)^2\right) \nonumber
\end{eqnarray}
The radical is generated by $u=(4,2,3,3,2,2,1,1)$ and one verifies the relation $M^3 v_1 = -v_1 -u$.
Finally, for $\tilde{E}_8$, we obtain:
\begin{eqnarray}
  q(x) &=& (\frac12 x_1-x_2)^2 + (\frac12 x_3-x_5)^2+(\frac12 x_8-x_9)^2  \nonumber \\
       & & + \frac35 (\frac56 x_1-x_4)^2 + \frac25 (x_4-\frac54 x_6)^2 + \frac23 (\frac34 x_6-x_7)^2 \nonumber \\
       & & + \frac13 \left( (x_1-\frac32 x_3)^2+(x_7-\frac32 x_8)^2\right) \nonumber
\end{eqnarray}
The radical is the span of $u=(6,3,4,5,2,4,3,2,1)$, $M^5v_1=-v_1-u$.

In summary, every solution $x$ to the equation $q_T(x)=1$ ($T$ affine) is of the form $x=w_i+ku$ for some $k\in\Z$, where $u$ generates the radical of $q_T$ and the $w_i$ are finitely many homology classes of Hopf bands contained in $S_{T'}$ for a spherical subtree $T'\subset T$. For certain $i$, the family $\{w_i+ku\}_{k\in\Z}$ is contained in finitely many orbits under the monodromy $M$ of $S_T$, while the members of the remaining families are fixed by some power $M^d$. Among the latter, there are homology classes that cannot be realised by simple closed curves, and there are such families whose members are realisable by simple closed curves.
\begin{question}
Can these homology classes be realised by {\em unknotted} simple closed curves? In other words, does $C_1(S_T)$ decompose into finitely many orbits under the monodromy, for any affine tree $T$?
\end{question}

\section{Infinite sets of orbits for hyperbolic trees} \label{sec:hyperbolic}
As described above, $C_1(S_T)$ is finite for the spherical trees and infinite for the affine trees. However, $C_1(S_T)$ could still decompose into finitely many orbits under the monodromy. We claim this is not anymore true for hyperbolic trees, at least when $\partial S_T$ is a knot.

\begin{proposition} \label{prop:infiniteknots}
Let $T$ be a hyperbolic tree, let $S_T$ be the corresponding fibre surface and denote the monodromy by $\phi$. If $\partial S_T$ is a knot, then the set $C_1(S_T)$ of homology classes of Hopf bands consists of infinitely many $\phi_*$-orbits.
\end{proposition}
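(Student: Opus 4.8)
The plan is to combine the affine case (Proposition~\ref{prop:affine}) with a dynamical argument on $H_1$. Write $M=\phi_{T,*}$ for the homological monodromy and $\langle x,y\rangle=xA_Tx^\top$, polarised suitably, for the symmetric bilinear form polarising $q_T$; recall $M$ preserves $\langle\cdot\,,\cdot\rangle$. Since $\partial S_T$ is a knot, a homology class is represented by a simple closed curve iff it is primitive (Remark~\ref{rem:primitive}), so every element of $q_T^{-1}(1)$ is; moreover $\det(M-I)=\pm1$ and $\det(M+I)=\pm\det(\partial S_T)\neq0$, so neither $+1$ nor $-1$ is an eigenvalue of $M$. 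As $C_1(S_T)$ is $M$-invariant, it suffices to exhibit inside $C_1(S_T)$ an infinite set meeting infinitely many $M$-orbits.

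First I would locate an affine subtree. A finite plane tree that is neither spherical nor affine contains, as an induced subtree, one of $\tilde D_m$ ($m\geq4$), $\tilde E_6$, $\tilde E_7$, $\tilde E_8$: a vertex of degree $\geq4$ gives $\tilde D_4$; two vertices of degree $3$ give some $\tilde D_m$; and a tree $T_{p,q,r}$ with a single degree-$3$ branch vertex contains $\tilde E_6=T_{3,3,3}$, $\tilde E_7=T_{2,4,4}$, or $\tilde E_8=T_{2,3,6}$. Fix such a subtree $T_0\subseteq T$ and let $u_0\in H_1(S_{T_0},\Z)\hookrightarrow H_1(S_T,\Z)$ span the radical of $q_{T_0}$; it has full support on the vertices of $T_0$. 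Because its support, and that of any class $w$ supported in $T_0$, avoids the vertices of $T\setminus T_0$, the edges of $T$ leaving $T_0$ contribute nothing, so $q_T(u_0)=q_{T_0}(u_0)=0$, $\langle w,u_0\rangle=0$, and hence $q_T(w+ku_0)=q_T(w)$ for all $k\in\Z$.

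Next, the infinite family of Hopf bands. By Proposition~\ref{prop:affine} (and the explicit relations in its proof), $S_{T_0}$ contains a Hopf band $w$, lying in the monodromy orbit of a standard one, with $\phi_{T_0,*}^{\,d}(w)=\pm(w+cu_0)$ for some $d\geq1$, $c\neq0$. Hence $w+kcu_0=\pm\,\phi_{T_0,*}^{\,dk}(w)$ is the class of an unknotted simple closed curve $\gamma_k\subset S_{T_0}\subseteq S_T$ for every $k\in\Z$ (the monodromy of $S_{T_0}$ acts by ambient isotopy and preserves unknottedness). By the previous paragraph $q_T(w+kcu_0)=q_T(w)=1$, so $\{\,w+kcu_0:k\in\Z\,\}$ is an infinite subset of $C_1(S_T)$ lying on a line in direction $u_0$. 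It remains to see that this family meets infinitely many $M$-orbits.

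For $j\geq1$, $f_j(x):=\langle(M^{j}+M^{-j})x,x\rangle$ is an $M$-invariant quadratic form, and $f_j(w+kcu_0)$ is a polynomial in $k$ with leading coefficient $2c^{2}\langle M^{j}u_0,u_0\rangle$. If the family met only finitely many $M$-orbits, each $f_j$ — being constant on orbits — would be bounded, hence constant, along it, forcing $\langle M^{j}u_0,u_0\rangle=0$ for all $j\in\Z$; equivalently, the $M$-cyclic span of $u_0$ would be totally isotropic for $q_T$. I would rule this out using that, $T$ being hyperbolic, $M$ is conjugate up to sign to a Coxeter element of an indefinite Coxeter system and so has spectral radius $\lambda>1$: splitting $H_1(S_T,\mathbb Q)$ into the expanding, contracting and unit-modulus parts of $M$ — the first two totally isotropic and meeting $\Z^n$ only in $0$, the last carrying a nondegenerate $M$-invariant form without eigenvalue $\pm1$ — one locates $u_0$ and contradicts total isotropy of its cyclic span (using also that $u_0$, being primitive, is not a $\pm1$-eigenvector). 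This last step is where I expect the main difficulty to lie: turning ``the cyclic span of $u_0$ is totally isotropic'' into a contradiction for \emph{every} hyperbolic $T$, i.e. showing that $u_0$ is not absorbed by the slow part of the monodromy dynamics — the unit-modulus part of $M$ is harmless in the Lorentzian range but not obviously so in general, and replacing $T_0$ by $T_0$ together with one neighbouring vertex (which makes the relevant subform Lorentzian) should be the way to reduce the general case to a tractable one.
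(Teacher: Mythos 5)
Your setup coincides with the paper's: locate an affine subtree $T_0\subset T$, use Proposition~\ref{prop:affine} and its explicit monodromy relations to produce the infinite family $w+kcu_0\in C_1(S_{T_0})\subset C_1(S_T)$ of classes of unknotted curves growing linearly in $k$, and then argue that such a family cannot be covered by finitely many $\phi_*$-orbits. Up to that point the argument is sound, and your reduction is valid: if the family met only finitely many orbits, each invariant form $f_j(x)=\langle (M^j+M^{-j})x,x\rangle$ would take finitely many values along the line, hence be constant in $k$, forcing $\langle M^ju_0,u_0\rangle=0$ for all $j$, i.e.\ total isotropy of the $M$-cyclic span of $u_0$.

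The gap is the final step, which you yourself flag: you never prove that this cyclic span cannot be totally isotropic, and the sketch offered does not suffice. Indefiniteness of $q_T$ and spectral radius greater than one do not preclude $M$-invariant totally isotropic subspaces --- a nondegenerate indefinite form has many, and they can sit inside the unit-modulus part of $M$ precisely when that part has Jordan blocks of size at least two at eigenvalues on $S^1$; this is exactly the mechanism producing the linearly growing orbits in the affine case, so it must be excluded for the full monodromy and cannot be dismissed by the splitting into expanding, contracting and unit-modulus parts alone (your auxiliary claims, e.g.\ that the expanding and contracting subspaces meet $\Z^n$ trivially, or that enlarging $T_0$ to a Lorentzian subtree renders the unit-modulus part harmless, are also unproven). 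That exclusion is the paper's key technical input, Lemma~\ref{lem:jordanblocks}: no Jordan block of size greater than one at a unit-modulus eigenvalue when $\partial S_T$ is a knot, proved not by linear algebra on such a splitting but via Levine--Tristram signature jumps combined with Liechti's theorem that for tree-like Hopf plumbings the number of unit-circle zeros of the Alexander polynomial equals the sum of signature and nullity; this is where the knot hypothesis enters. The paper then concludes with a bounded-or-exponential growth dichotomy (Lemma~\ref{lem:exponential}) and a counting argument rather than with invariant forms. As it stands, your proposal reduces the proposition to an unproven statement that appears no easier than Lemma~\ref{lem:jordanblocks}, so it is not yet a proof.
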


The key idea for proving the proposition is to take an affine subtree $T'\subset T$ and to compare the action of the monodromy $\phi'_*$ on $H_1(S_{T'},\Z)$ with the action of $\phi_*$ on $H_1(S_T,\Z)$. In the previous section, we found $\phi'_*$-orbits of Hopf bands consisting of vectors $v_k\in H_1(S_{T'},\Z)$ that grow linearly in $k\in\Z$ with respect to any norm. This was possible because the Jordan normal form of $\phi'_*$ has Jordan blocks of size two to eigenvalues which are roots of unity. However, it turns out that the monodromy $\phi_*$ of the larger surface $S_T$ does not have such Jordan blocks. Therefore, the family $v_k$ cannot be a union of finitely many orbits under $\phi_*$ since the ``gaps" between consecutive members of an orbit must either stay bounded or grow exponentially. More specifically, the proposition will follow from the two subsequent lemmas.

\begin{lemma} \label{lem:jordanblocks}
Let $T$ be a hyperbolic tree such that $\partial S_T$ is a knot, and denote the corresponding monodromy by $\phi:S_T\to S_T$. The Jordan normal form of $\phi_*:H_1(S_T,\Z)\to H_1(S_T,\Z)$ cannot contain any Jordan block of size greater than one to an eigenvalue of modulus one.
\end{lemma}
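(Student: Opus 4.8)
\emph{Approach.} The plan is to compute the homological monodromy $\phi_*=M$ explicitly enough to see its entire Jordan structure, exploiting the fact that a tree is bipartite, and then to use the hypothesis on $\partial S_T$ only at the very end.

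I would start from the relations already available, $M=V^{-\top}V$ with $V$ invertible and $V+V^\top=-A_T$, together with the observation that a Hopf plumbing along an arc amounts to composing the monodromy with a positive Dehn twist along the core curve of the new band. Hence $\phi_*$ is, up to conjugacy, a product of the transvections $\tau_i\colon x\mapsto x+\langle x,v_i\rangle v_i$ in the intersection form $\langle\,,\,\rangle=V-V^\top$, one per vertex of $T$. Writing the bipartition $\{v_1,\dots,v_n\}=V_+\sqcup V_-$, the transvections attached to vertices of the same colour commute (their curves are disjoint) and fix the span of that colour class, so $\tau_\pm:=\prod_{i\in V_\pm}\tau_i$ are well defined, and, because $T$ is a tree, the full product of the $\tau_i$ can be reordered so that $M$ is conjugate to $\tau_+\tau_-$. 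Choosing orientations of the $v_i$ so that the intersection form takes the block shape $\begin{pmatrix}0&E\\-E^\top&0\end{pmatrix}$, where $E$ is the $\{0,1\}$ incidence matrix of the edges between $V_+$ and $V_-$ (no obstruction, since $T$ has no cycles), a short computation gives
\[
M\ \sim\ \begin{pmatrix}I-EE^\top&-E\\E^\top&I\end{pmatrix}.
\]

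The point of this normal form is that everything is controlled by the symmetric positive semidefinite — hence diagonalisable — matrix $EE^\top$. Decompose $\mathbb{R}^n=\mathbb{R}^{V_+}\oplus\mathbb{R}^{V_-}$ into $\ker(E^\top)\oplus\ker(E)$ together with the subspaces $W_\kappa\oplus E^\top W_\kappa$ running over the nonzero eigenvalues $\kappa$ of $EE^\top$, with $W_\kappa$ the corresponding eigenspace. Each summand is $M$-invariant; $M$ is the identity on the first one, and on $W_\kappa\oplus E^\top W_\kappa$ it is a direct sum of copies of $\begin{pmatrix}1-\kappa&-\kappa\\1&1\end{pmatrix}$, whose characteristic polynomial $t^2-(2-\kappa)t+1$ has discriminant $\kappa(\kappa-4)$. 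For $\kappa\neq 4$ this $2\times2$ block has distinct eigenvalues, hence is semisimple; the blocks with $\kappa=4$ are the only source of non-semisimplicity, and each is a single Jordan block of size two at the eigenvalue $-1$. Thus $M$ is semisimple apart from a possible Jordan block at $-1$, which occurs exactly when $4$ is an eigenvalue of $EE^\top$.

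Finally I would exclude this under the hypothesis that $\partial S_T$ is a knot. From $M+I=V^{-\top}(V+V^\top)=-V^{-\top}A_T$ one gets $\det(M+I)=\pm\det A_T=\pm\det(V+V^\top)$, which is, up to sign, the determinant of the link $\partial S_T$; when that link is a knot this is a nonzero (odd) integer, so $-1$ is not an eigenvalue of $M$, and therefore $M$ has no Jordan block of size greater than one at all — in particular none at an eigenvalue of modulus one. (This also shows that what is really used is $\det A_T\neq 0$, i.e. precisely the knot condition, rather than hyperbolicity of $T$.) The step I expect to be the main obstacle is the passage from ``$\phi_*$ is a product of transvections along the $v_i$'' to the normal form $M\sim\tau_+\tau_-$ and the block matrix above: one has to deal carefully with the ordering of the plumbings and with the orientation conventions, using the tree structure to see that all admissible orderings yield conjugate operators and that the intersection form can be normalised as stated. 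Once the explicit form of $M$ is in hand, the rest is elementary linear algebra.
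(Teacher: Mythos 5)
Your argument is correct, but it takes a genuinely different route from the paper's proof, and it actually proves more. The paper never computes the Jordan form directly: it argues via Levine--Tristram signatures, observing that a unit-circle eigenvalue $\omega$ of $\phi_*$ is a zero of $\Delta_K$, that the signature jump at $\omega$ is bounded by the geometric multiplicity, and then invoking a result from Liechti's preprint \cite{Li2} (for tree-like Hopf plumbings the number of unit-circle zeros of $\Delta_K$, counted with multiplicity, equals $\sigma(K)$) to force equality of algebraic and geometric multiplicities at every modulus-one eigenvalue. You instead pin down the conjugacy class of $\phi_*$ from the bipartite structure of $T$, in the spirit of A'Campo's analysis of Coxeter transformations of trees \cite{AC1}: your block computation is correct ($\tau_+\tau_-$ has matrix $\left(\begin{smallmatrix}I-EE^\top&-E\\E^\top&I\end{smallmatrix}\right)$, with characteristic polynomial $t^2-(2-\kappa)t+1$ on each two-dimensional piece and the identity on $\ker E^\top\oplus\ker E$), so the only possible non-semisimple eigenvalue is $-1$, occurring exactly when $4$ is an eigenvalue of $EE^\top$, i.e.\ when $\det A_T=0$; and since $\det(\phi_*+I)=\pm\det A_T=\pm\Delta_K(-1)$ is an odd integer when $\partial S_T$ is a knot, $\phi_*$ is in fact diagonalisable --- a stronger conclusion than the lemma, obtained without hyperbolicity, without Levine--Tristram theory and without the unpublished input \cite{Li2}. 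What your route requires instead, and what you rightly identify as the substantive step, is the reduction of $\phi_*$ to the bipartite product $\tau_+\tau_-$: (i) the homological monodromy of the iterated plumbing is the product of the Picard--Lefschetz transvections along the core curves, one per vertex, in some plumbing-compatible order (this follows from the behaviour of open books under Murasugi sum, cf.\ \cite{St,Ga}), and (ii) over a tree all such orderings yield conjugate products, via commutation of twists at non-adjacent vertices and source-to-sink moves on acyclic orientations; both facts are true and standard, but they must be written out (equivalently, one can work directly with $M=V^{-\top}V$ and reflection-functor moves on the edge orientations encoded in $V$). One small caveat: the hypothesis you actually use is oddness of the knot determinant, which gives $\det A_T\neq 0$; this is implied by, but not equivalent to, $\partial S_T$ being a knot, so the parenthetical claim that $\det A_T\neq 0$ is ``precisely the knot condition'' should be softened.
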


\begin{proof}
The main ideas for the proof are contained in an appendix by Feller and Liechti to an article of Liechti~\cite{Li1}, see also~\cite{GL}. However, we choose to reformulate them here for the reader's convenience. Let $A$ be a Seifert matrix for $S_T$ with respect to some basis of $H_1(S_T,\Z)$. Then, the monodromy $\phi_*$ has matrix $A^{-\top}A$ with respect to that basis. Given an eigenvalue $\omega$ of $\phi_*$, we denote the algebraic and geometric multiplicities of $\omega$ by $m_{alg}(\omega)$ and $m_{geom}(\omega)$, respectively. Thus, $m_{alg}(\omega)$ is the multiplicity of the zero $\omega$ of the characteristic polynomial of $\phi_*$, while $m_{geom}(\omega)$ is the number of Jordan blocks to $\omega$ in the Jordan normal form of $\phi_*$. Our goal is to prove $m_{geom}(\omega)=m_{alg}(\omega)$ for every eigenvalue $\omega\in S^1$ of $\phi_*$, or, equivalently, of $\phi_*^{-1}$. Suppose to this end that $\omega\in S^1$ is an eigenvalue of $\phi_*^{-1}=A^{-1}A^\top$. Then we have
\[ 0=\det(A)\det(\omega I - \phi_*^{-1})=\det(\omega A - A^\top)=\Delta_K(\omega), \]
where $\Delta_K(t)$ denotes the Alexander polynomial of the knot $K=\partial S_T$ and $I$ denotes the identity map on $H_1(S_T,\Z)$. For $\omega\in S^1$, the $\omega$-signature (after Levine and Tristram~\cite{Tr}) is defined to be the signature $\sigma_\omega\in\Z$ of the Hermitian matrix
\[ M_\omega = (1-\omega)A + (1-\bar{\omega})A^\top = -(1-\bar{\omega})(\omega A - A^\top), \]
that is, the number of positive eigenvalues minus the number of negative eigenvalues of $M_\omega$. As $\omega=e^{\pi i t}$ traverses one half of the unit circle for $t\in(0,1]$, the $\omega$-signature $\sigma_\omega$ stays constant except at zeros $\omega_0$ of $\Delta_K$ (by the first equation above), where it may jump by some even amount $2j_{\omega_0}$. In addition, if $\omega\in S^1$ is an eigenvalue of $\phi_*^{-1}$,
\[ |j_\omega | \leq \nul(M_\omega) = \nul(\omega I- \phi_*^{-1}) = m_{geom}(\omega) \leq m_{alg}(\omega), \]
where $\nul(B)$ denotes the nullity of a matrix $B$, that is, the geometric multiplicity of the eigenvalue $0$ of $B$. For $\omega\in S^1$ near $1$ we have $\sigma_\omega=0$, whereas for $\omega=-1$, $\sigma_\omega$ takes the value of the classical signature invariant for knots, $\sigma(K)$. This implies
\[ \sigma(K) = \sigma_{-1}(K) \leq 2\cdot\!\sum_{\omega\in S_+^1}|j_\omega| \leq 2\cdot\!\sum_{\omega\in S_+^1} m_{alg}(\omega) = \sigma(K), \]
where $S_+^1=\{e^{\pi i t}\ | \ t\in (0,1]\}$ denotes the upper half of the unit circle. The last equality follows from the fact that the number of zeros of $\Delta_L$ on $S^1$ (counted with multiplicity) equals $\sigma(L)+\nul(L)$ for any tree-like Hopf plumbing $L$, where $\nul(L)=\nul(M_{-1})$ equals zero if $L$ is a knot. This is proven in a preprint by Liechti~\cite{Li2}. Therefore the above inequalities are in fact equalities, which implies $m_{geom}(\omega) = m_{alg}(\omega)$ for all zeros $\omega\in S_+^1$ of $\Delta_K$. By the symmetry of Alexander polynomials, the same holds for the zeros $\omega$ of $\Delta_K(t)$ such that $-\omega\in S_+^1$. Finally, $\Delta_K(1)\neq 0$ because $K$ is a knot. This ends the proof.
\end{proof}

\begin{lemma} \label{lem:exponential}
Let $\Phi$ be a matrix of size $n\x n$ with coefficients in $\C$, and let $v\in\C^n$ be any vector. Suppose that the Jordan normal form of $\Phi$ does not contain any Jordan block of size greater than one to an eigenvalue of modulus one. Then the sequence $\{\Phi^k(v)\}_{k\in\N}$ is either bounded or grows exponentially (with respect to any norm $\|.\|$ on $\C^n$).
\end{lemma}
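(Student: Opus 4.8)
The plan is to split $\C^n$ into the expanding part and the non-expanding part of $\Phi$ and to handle them separately. Let $E_{>1}$ be the sum of the generalized eigenspaces of $\Phi$ belonging to eigenvalues of modulus strictly greater than $1$, and let $E_{\leq 1}$ be the sum of the generalized eigenspaces belonging to eigenvalues of modulus at most $1$; both are $\Phi$-invariant and $\C^n=E_{\leq 1}\oplus E_{>1}$. Accordingly I would write $v=v'+w$ with $v'\in E_{\leq 1}$, $w\in E_{>1}$. Since boundedness of a sequence is independent of the chosen norm, and since a lower bound of the form $c\mu^k$ with $\mu>1$ persists (with a different constant) under passage to an equivalent norm, it suffices to establish the dichotomy for the Euclidean norm. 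The claim to prove is then: if $w=0$ the sequence $\{\Phi^k v\}_k$ is bounded, and if $w\neq 0$ there are $c>0$ and $\mu>1$ with $\|\Phi^k v\|\geq c\mu^k$ for all large $k$.

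First I would show that $\{\Phi^k|_{E_{\leq 1}}\}_k$ is a bounded sequence of operators. Decompose $E_{\leq 1}=E_{<1}\oplus E_{=1}$ into the generalized eigenspaces for eigenvalues of modulus $<1$ and $=1$. On $E_{<1}$ the spectral radius is $<1$, so by Gelfand's formula $\|\Phi^k|_{E_{<1}}\|$ decays geometrically, in particular stays bounded (it even tends to $0$). On $E_{=1}$ the hypothesis of the lemma enters: there is no Jordan block of size $>1$ to an eigenvalue of modulus one, so $\Phi|_{E_{=1}}$ is diagonalizable with all eigenvalues on the unit circle, and conjugating to diagonal form shows $\|\Phi^k|_{E_{=1}}\|$ is bounded independently of $k$. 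Hence $C:=\sup_k\|\Phi^k|_{E_{\leq 1}}\|<\infty$, so $\|\Phi^k v'\|\leq C\|v'\|$ for all $k$; if $w=0$ this already gives that $\{\Phi^k v\}_k=\{\Phi^k v'\}_k$ is bounded.

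It remains to treat the case $w\neq 0$, and here the cleanest route is to estimate the \emph{inverse} of $\Phi$ on $E_{>1}$ rather than to analyse the leading term of $\Phi^k w$. Every eigenvalue of $\Phi|_{E_{>1}}$ has modulus $>1$, so the minimal such modulus $\rho_{\min}$ satisfies $\rho_{\min}>1$; fix $\mu$ with $1<\mu<\rho_{\min}$. Then $\Phi|_{E_{>1}}$ is invertible, its inverse has spectral radius $1/\rho_{\min}<1/\mu$, and Gelfand's formula yields a constant $C'$ with $\|(\Phi|_{E_{>1}})^{-k}\|\leq C'\mu^{-k}$ for all $k$. Applying this to $\Phi^k w\in E_{>1}$ and using $w=(\Phi|_{E_{>1}})^{-k}(\Phi^k w)$ gives $\|w\|\leq C'\mu^{-k}\|\Phi^k w\|$, i.e.\ $\|\Phi^k w\|\geq (\|w\|/C')\mu^k$. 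Combining this with $\|\Phi^k v\|\geq\|\Phi^k w\|-\|\Phi^k v'\|\geq(\|w\|/C')\mu^k-C\|v'\|$ yields $\|\Phi^k v\|\geq\tfrac12(\|w\|/C')\mu^k$ for all sufficiently large $k$, which is the asserted exponential growth.

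The argument is essentially routine once this decomposition is in place. The only step that genuinely uses the hypothesis is the boundedness of $\Phi^k$ on the modulus-one part $E_{=1}$, and the one point worth getting right is to derive the lower bound on the expanding part from a norm estimate on $(\Phi|_{E_{>1}})^{-k}$: doing it instead via the dominant eigenvalues of $\Phi|_{E_{>1}}$ would force one to rule out cancellations among several eigenvalues of equal maximal modulus, which the inverse estimate sidesteps entirely.
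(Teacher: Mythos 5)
Your proof is correct, and it is more detailed than (and organized differently from) the one in the paper. The paper argues by passing to Jordan normal form and then reducing to the case of a single Jordan block, where the dichotomy is immediate: $|\lambda|<1$ gives boundedness, $|\lambda|>1$ gives exponential growth, and $|\lambda|=1$ forces (by hypothesis) a block of size one, hence boundedness; the step that a direct sum of blocks, each bounded or exponentially growing, again satisfies the dichotomy is left implicit. You instead split $\C^n=E_{\leq 1}\oplus E_{>1}$ once and for all, prove uniform boundedness of $\Phi^k$ on $E_{\leq 1}$ (here the hypothesis enters exactly as in the paper, via diagonalizability on the modulus-one part), and obtain the lower bound on $E_{>1}$ from the Gelfand estimate $\|(\Phi|_{E_{>1}})^{-k}\|\leq C'\mu^{-k}$ applied to $w=(\Phi|_{E_{>1}})^{-k}(\Phi^k w)$. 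This buys you two things the paper's sketch does not spell out: the case of a vector with components in several blocks is handled explicitly by the triangle inequality $\|\Phi^k v\|\geq\|\Phi^k w\|-\|\Phi^k v'\|$, and the inverse estimate sidesteps any discussion of which block or eigenvalue dominates, so no cancellation issues among eigenvalues of equal maximal modulus arise. The resulting bound $\|\Phi^k v\|\geq c\mu^k-d$ is exactly the notion of exponential growth the paper uses, so the argument is complete.
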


\noindent By exponential growth of a sequence $\{v_k\}_{k\in\N}\subset \C^n$ with respect to a norm $\|.\|$ we mean the existence of constants $h>1$, $c>0$, $d\geq 0$, such that $\|v_k\|\geq ch^k-d$ for all $k\in\N$.

\begin{proof}[Proof of Lemma~\ref{lem:exponential}]
We may first assume that $\Phi$ is already in Jordan normal form, and second that $\Phi$ consists of just one Jordan block to some eigenvalue $\lambda$. If $v=0$, the conclusion is clear, so we assume $v\neq 0$. If $|\lambda|<1$, the sequence $\Phi^k(v)$ is bounded, if $|\lambda|>1$, it grows exponentially and if $|\lambda|=1$, $\Phi$ is of size one, so $\Phi^k(v)$ is bounded.
\end{proof}

\begin{proof}[Proof of Proposition~\ref{prop:infiniteknots}]
Let $T$ be a hyperbolic tree. Since $T$ contains an affine subtree $T'$, we have at least one infinite family $\{v_k\}_{k\in\N}$ of elements $v_k\in C_1(S_{T'})\subset C_1(S_T)$ which grow linearly in $k$ when seen as a sequence of vectors in $H_1(S_T,\Z)\subset H_1(S_T,\C)\cong\C^n$. More precisely, the $v_k$ are pairwise distinct, and there exist constants $a>0,b\geq 0$, such that
\[ \|v_k\|\leq ak+b,\quad \forall k\in\N. \]
Indeed, we found such families for every affine tree in the preceding section. Assume now that $\partial S_T$ is a knot. While the family $\{v_k\}$ could still decompose into finitely many orbits under the monodromy $\phi'_*$ of the smaller surface $S_{T'}$, we will show this cannot be the case for orbits of $\phi_*$, the monodromy of $S_T$. Namely, assume to the contrary that there were $r$ indices $k_1,\ldots,k_r\in\N$ such that the $\phi_*$-orbits of $v_{k_1},\ldots,v_{k_r}$ covered the whole sequence $\{v_k\}_{k\in\N}$. By Lemma~\ref{lem:jordanblocks} and Lemma~\ref{lem:exponential} (applied to $\Phi=\phi_*$), we obtain the following, for every $i\in\{1,\ldots,r\}$. Either the $\phi_*$-orbit of $v_{k_i}$ is bounded and thus finite, or there exist $h_i>1,c_i>0,d_i\geq 0$, such that $\|\phi_*^k(v_{k_i})\| \geq c_ih_i^k - d_i$ for all $k\in\N$. Replacing $h_i,c_i,d_i$ by minimal or maximal values $h>1,c>0,d\geq 0$ respectively, we have
\[ \|\phi_*^k(v_{k_i})\| \geq c h^k-d,\quad \forall k\in\N. \]
Now, let $K\in\N$ be large (to be specified later) and set $R:=aK+b$. We would like to compare the numbers
\begin{eqnarray}
   p&:=&\#\{k\in\N\ | \ \|v_k\|\leq R\},\nonumber \\
   q&:=&\#\{(k,i)\in\N\x\{1,\ldots,r\} \ | \ \|\phi_*^k(v_{k_i})\| \leq R\}. \nonumber
\end{eqnarray}
First, $p\geq K$, since $\|v_k\|\leq aK+b=R$ for all $k\leq K$, and the $v_k$ are pairwise distinct. Second, taking $N:=\left\lfloor\frac{K}{r}\right\rfloor -1$, we have $K>rN$, and
\[ ch^N-d=ch^{\lfloor\frac{K}{r}\rfloor -1}-d\geq aK+b=R, \]
for large enough $K$, since $h,c>1$. Then, we have $q\leq rN$, since $\|\phi_*^k(v_{k_i})\|\geq ch^N-d\geq R$ as soon as $k\geq N$. Thus $p\geq K>rN\geq q$, contradicting our assumption on the sequence $v_k$ being covered by the $\phi_*$-orbits of its members $v_{k_1},\ldots,v_{k_r}$. This finishes the proof.
\end{proof}

\begin{question}[by Pierre Dehornoy]
Can every embedded Hopf band in $S_T$ be obtained from one of the Hopf bands $v_i$ by successively applying the monodromies of $S_{T'}$, where $T'$ ranges over suitable subtrees of $T$?
\end{question}



\begin{thebibliography}{AAAA}
\bibitem[AC1]{AC1}
     N.~A'Campo: {\em Sur les valeurs propres de la transformation de Coxeter}, Inventiones~Math.~\textbf{33} (1976), 61-67.

\bibitem[AC2]{AC2}
     N.~A'Campo: {\em Planar trees, slalom curves and hyperbolic knots}, Inst. Hautes Etud.~Sci. Publ.~Math. \textbf{88} (1998), 171-180.

\bibitem[BS]{BS}
     F.~Bonahon, L.~Siebenmann: {\em New Geometric Splittings of Classical Knots, and the Classification and Symmetries of Arborescent Knots} (1979-2009).

\bibitem[BZH]{BZH}
     G.~Burde, H.~Zieschang, M.~Heusener: {\em Knots}, De Gruyter Studies in Mathematics, (2013).


\bibitem[FM]{FM}
     B.~Farb, D.~Margalit: {\em A Primer on Mapping Class Groups}, Princeton University Press, (2012).

\bibitem[Mi]{Mi}
     F.~Misev: {\em Cutting arcs for torus links and trees}, preprint (2015), arXiv:1409.0644.

\bibitem[Ga]{Ga}
     D.~Gabai: {\em Detecting fibred links in $S^3$}, Comm.~Math.~Helvetici~\textbf{61} (1986), no.~1, 519-555.

\bibitem[GG]{GG}
     E.~Giroux, N.~Goodman: {\em On the stable equivalence of open books in three-manifolds}, Geometry \& Topology \textbf{10} (2006), 97-114.

\bibitem[GL]{GL}
     P.~Gilmer, Ch.~Livingston: {\em Signature jumps and Alexander polynomials for links}, (2015), arXiv:1508.04394.

\bibitem[Hu]{Hu}
     J.E.~Humphreys: {\em Reflection groups and Coxeter groups}, Cambridge University Press, (1992).



\bibitem[Li1]{Li1}
     L.~Liechti: {\em Signature, positive Hopf plumbing and the Coxeter transformation}, Osaka Journal of Math. (2014), arXiv:1401.5336.

\bibitem[Li2]{Li2}
     L.~Liechti: {\em On the product of two multitwists in homology}, preprint.

\bibitem[Ru]{Ru}
     L.~Rudolph: {\em A characterisation of quasipositive Seifert surfaces (Constructions of quasipositive knots and links, III)}, Topology, \textbf{31} (1992), no.~2, 231-237.

\bibitem[Sa]{Sa}
     N.~Saveliev: {\em Lectures on the Topology of 3-Manifolds: An Introduction to the Casson Invariant}, De Gruyter, (2012).

\bibitem[St]{St}
     J.~Stallings: {\em Constructions of fibred knots and links}, Algebraic and geo\-metric topology, Proc.~Sympos.~Pure Math.~\textbf{32} (1978), 55-60, Amer.\ Math.\ Soc., Providence, R.I.

\bibitem[Tr]{Tr}
     A.G.~Tristram: {\em Some cobordism invariants for links}, Proc.~Cambridge Philos.~Soc.~\textbf{66} (1969), 251-264.

\end{thebibliography}
\end{document}